\tikzset{degil/.style={
            decoration={markings,
            mark= at position 0.5 with {
                  \node[transform shape] (tempnode) {$\slash$};
                  }
              },
              postaction={decorate}
}
}
\newtheorem{theorem}{Theorem}
\newtheorem{definition}{Definition}
\newtheorem{proposition}{Proposition}
\numberwithin{equation}{section}
\theoremstyle{plain}
\newcommand{\R}{\mathbb{R}}
\newcommand{\N}{\mathbb{N}}
\renewcommand{\P}{\mathbb{P}}
\newtheorem{remark}{Remark}
\begin{document}

\begin{frontmatter}
\title{Pathwise vs.~Path-by-Path Uniqueness}
\runtitle{Pathwise vs.~Path-by-Path Uniqueness}

\begin{aug}
\author{\fnms{Alexander} \snm{Shaposhnikov}\ead[label=e1]{
shal1t7@mail.ru}}
\and
\author{\fnms{Lukas} \snm{Wresch}\ead[label=e2]{wresch@math.uni-bielefeld.de}}

\runauthor{A. Shaposhnikov, L. Wresch}

\affiliation{Bielefeld University and Technion, Israel Institute of Technology}

\address{Alexander Shaposhnikov\\
Faculty of Mathematics, Bielefeld University\\
Bielefeld, Germany\\
\printead{e1}}

\address{Lukas Wresch\\
Faculty of Industrial Engineering\\
Technion, Israel Institute of Technology\\
Haifa, Israel\\
\printead{e2}}

\end{aug}

\begin{abstract}
We construct a series of stochastic differential equations
of the form $dX_t = b(t, X_t)  dt +  dB_t$ which exhibit \textit{nonuniqueness}
in the \textit{path-by-path} sense while having a unique adapted solution
in the sense of stochastic processes, i.e. \textit{pathwise} uniqueness holds. 
\end{abstract}

\begin{keyword}[class=MSC]
\kwd{60H10}
\kwd{60J65}
\kwd{60G17}
\kwd{34F05}

\end{keyword}

\end{frontmatter}

\section{Introduction}
In this paper we consider the stochastic differential equation
\begin{equation}\label{eq:abstract_sde}
dX_t = b(t, X_t)\,dt + dB_t, \ X_0 = x_0,
\end{equation}
where $b \colon [0,T]\times \mathbb{R}^d \rightarrow  \mathbb{R}^d$ is a Borel measurable mapping, $(B_t)_{t \geq 0}$ is a standard 
$d$--dimensional Brownian motion, $x_0 \in \mathbb{R}^d$.
Let us recall that a solution (\textit{weak} solution) to SDE~\eqref{eq:abstract_sde} is
a pair of a Brownian motion $(B_t)_{t \geq 0}$ 
defined on some filtered probability space
$(\Omega, \mathcal F, (\mathcal F_t)_{t \geq 0})$ and
a stochastic process $(X_t)_{t \in [0, T]}$ adapted to the filtration
$(\mathcal F_t)_{t\in[0,\infty)}$ such that $\P$-a.s.
\begin{equation}\label{eq:abstract_ode}
X_t = x_0 + \int_{[0, t]} b(s, X_s) \, ds  + B_ t , \ t \in [0, T].
\end{equation}
The solution $(B, X)$ is called a \textit{strong} solution if the process
$(X_t)_{t \in [0, T]}$ is adapted to the augmented filtration (i.e.~the completed filtration)
 $(\overline{F^{B}})_{t \geq 0}$
generated by the Brownian motion.

\begin{definition}\label{de:weak_uniqueness}
For SDE \eqref{eq:abstract_sde} weak uniqueness holds if for any two solutions $(B, X)$, $(\widetilde{B}, Y)$  (which may be defined on different filtered probability spaces) one has
$$
\mathrm{Law}(X) = \mathrm{Law}(Y).
$$
\end{definition}

\begin{definition}\label{de:pathwise_uniqueness}
For SDE \eqref{eq:abstract_sde} pathwise uniqueness holds if for any two solutions $(B, X)$, $(B, Y)$ defined on the same filtered probability space with the same Brownian motion $(B_t)_{t \geq 0}$
there exists a measurable set $\Omega' \subseteq \Omega$ with $\P[\Omega'] = 1$ such that
$$
X_t(\omega) = Y_t(\omega), \qquad \omega \in \Omega', t \in[0,T].
$$
\end{definition}

At the same time one can consider random ordinary differential equation~\eqref{eq:abstract_ode} and ask whether the uniqueness holds in the
pure ODE setting.
\begin{definition}\label{de:path_by_path_uniqueness}
For  SDE \eqref{eq:abstract_sde} \textit{path-by-path} uniqueness holds
if  there exists a measurable set 
$\Omega' \subset C([0,T], \mathbb{R}^d)$ of full Wiener measure such
that for any Brownian trajectory from $\Omega'$
 integral equation \eqref{eq:abstract_ode} has a unique solution.
\end{definition}

Let us point out that in Definition \ref{de:pathwise_uniqueness}
of  \textit{pathwise uniqueness}
the set $\Omega' \subset \Omega$ of full measure a priori is allowed to depend  on
the both processes $X$ and $Y$.
This is in stark contrast to \textit{path-by-path} uniqueness, where there is a set of full measure $\Omega'' := B^{-1}(\Omega')$, where
$\Omega' \subset C([0, T], \mathbb{R}^d)$ is the set of ``good''
Brownian trajectories from Definition \ref{de:path_by_path_uniqueness}, 
such that the functions $t \rightarrow X_{t}(\omega)$ and $t \rightarrow Y_t(\omega)$ have to coincide for all $\omega \in \Omega''$.
Furthermore, we call a map 
$
\Omega \supseteq \Omega' \rightarrow X(\omega)\in C([0,T], \mathbb{R}^d)$
a \textit{path-by-path solution} if $\mathbb{P}[\Omega']=1$ and $X(\omega)$ solves  ODE \eqref{eq:abstract_ode} for all $\omega\in\Omega'$. Note that \textit{path-by-path} solutions are not required to be adapted to the filtration 
$(\mathcal F_t)_{t \geq 0}$  but every solution of the corresponding SDE yields a \textit{path-by-path} solution.
The diagram below represents the connections between different concepts of \textit{existence} and \textit{uniqueness}
of solutions to SDE \eqref{eq:abstract_sde} which immediately follow from the definitions.

\begin{figure}[h]
\begin{tikzpicture} [node distance=3.0cm]
\node(a)[rectangle, draw=black]
at (0, 0)
{
\textit{path-by-path existence}
};

\node(b)[rectangle, draw=black]
at (3.5, 0)
{
\textit{weak existence}
};

\node(c)[rectangle, draw=black] 
at (6.5, 0)
{
\textit{strong existence}
};

\draw[implies-,
shorten >= 2pt, shorten <= 2pt, 
double equal sign distance] 
(a) -- (b);

\draw[implies-,
shorten >= 2pt, shorten <= 2pt, 
double equal sign distance] 
(b) -- (c);

\node(d)[rectangle, draw=black]
at (-0.5, -1)
{
\textit{path-by-path uniqueness}
};

\node(e)[rectangle, draw=black]
at (3.5, -1)
{
\textit{pathwise uniqueness}
};

\node(f)[rectangle, draw=black]
at (7, -1)
{
\textit{weak uniqueness}
};

\draw[-implies,
shorten >= 2pt, shorten <= 2pt, 
double equal sign distance] 
(d) -- (e);

\draw[-implies,
shorten >= 2pt, shorten <= 2pt, 
double equal sign distance] 
(e) -- (f);

\end{tikzpicture}
\end{figure}

The classical example of a SDE which has a \textit{weak} solution
but no \textit{strong} solutions is Tanaka's equation
\begin{equation}\label{eq:tanaka}
dX_t = \operatorname{sgn}(X_t)\,dB_t, \ X_0 = 0.
\end{equation}
For SDE \ref{eq:tanaka}
\textit{weak uniqueness} holds but 
\textit{pathwise uniqueness} does not (see e.g. \cite{KS98}, p.~301,  Example~3.5).
It is also worth mentioning the celebrated example due to B. Tsirelson 
(see~\cite{Ts75}) of a SDE of the form
$$
 dX_t = b(X_{\leq t}, t)\, dt +  dB_t, \ X_0  = 0,
$$
where $b$ is a bounded Borel measurable function of $t$ and
the ``past'' of $X$ up to the time $t$, 
which admits a unique (in the \textit{weak} sense) \textit{weak} solution but no \textit{strong} solutions and \textit{pathwise uniqueness} does not hold.

The question whether every \textit{path-by-path} solution can be obtained from a \textit{weak} solution to the SDE was posed as an open problem in \cite{AL17} and also was mentioned in the book \cite{Fla15} (see the discussion on p.~12).
In this paper we show that, in general, this is not true. Moreover, we construct SDEs  such that a \textit{strong} solution exists, \textit{pathwise} uniqueness holds, but \textit{path-by-path} uniqueness fails to hold.

\medskip

Concerning the historical development of \textit{path-by-path} uniqueness to our knowledge the first result was obtained by A.~M.~Davie in \cite{Dav07} for the case when $b$ is Borel measurable and bounded. Later, Davie extended his result and proved that \textit{path-by-path} uniqueness holds in the non-degenerate multiplicative noise case (see \cite{Dav11}).
The original result of Davie was established with a different method by
the first author in \cite{Sh16} 
(see also some corrections in \cite{Sh17}), which
enabled him to present a simpler proof of the main theorem
from \cite{Dav07} and strengthen it in multiple directions.
In particular, in \cite{Sh16} \textit{path-by-path} uniqueness was obtained for some unbounded drift coefficients $b$ and 
by carefully examining the arguments one can show that  the set $\Omega' \subseteq \Omega$ where 
\textit{path-by-path} uniqueness holds can be constructed independently of the initial condition.
R.~Catellier and M.~Gubinelli in \cite{CG16} showed that 
\textit{path-by-path} uniqueness can be established if the Wiener process is replaced by a fractional Brownian motion in $\R^d$ with Hurst parameter $H$. Furthermore, the drift $b$ was allowed to be merely a distribution as long as $H$ was sufficiently small.
In the work \cite{BFGM14} L.~Beck, F.~Flandoli, M.~Gubinelli and M.~Maurelli proved that \textit{path-by-path} uniqueness does not only hold for SDEs, but also for SPDEs.
In \cite{Pri18} E.~Priola considered equations driven by a L\'evy process such that the L\'evy measure fulfills some integrability condition, see also \cite{Zh18} for related results.
In 2016 O.~Butkovsky and L.~Mytnik showed in \cite{BM16} that 
\textit{path-by-path} uniqueness holds for the stochastic heat equation with space-time white noise for bounded Borel measurable drifts.
In the works \cite{Wre16, Wre17} the second author established \textit{path-by-path} uniqueness for the case where $\R^d$ is replaced by a Hilbert space $H$ and $B$ is a cylindrical Wiener process as long as the linear negative operator is added to the SDE and the nonlinear part is bounded with respect to a specific norm, a condition which is trivial if $\operatorname{dim} H < \infty$.
In the recent paper \cite{Pri19} E.~Priola
improved the aforementioned result 
by allowing a time--dependent coefficient in front of the L\'evy noise which can in essence be as degenerate as in the condition for  \textit{pathwise} uniqueness.

\medskip

In conclusion, \textit{path-by-path} uniqueness can be established when the drift is singular and also in the case when the noise term is degenerate.
However, in general, the conditions to establish \textit{path-by-path} uniqueness are stricter than those for \textit{pathwise} uniqueness so in some cases there is a ``gap'' between the available \textit{pathwise} and the \textit{path-by-path} results. In this paper we would like to add a new point of focus. By carefully constructing a SDE we can determine that any global ``solution'' must know something about its own future and cannot be adapted to a filtration with respect to which the ``driving'' stochastic process remains a Brownian motion. Next, we can construct a SDE having a unique adapted solution and with probability one having some other non--adapted solutions to the corresponding ODE.

\section{Bessel processes}
The constructions in the next sections are based on the properties of the stochastic differential
equations governing  Bessel processes. For the sake of completeness below we recall 
the known results which will be used in the subsequent considerations.

\begin{definition}
For $\delta > 0$, $Z_0 \geq 0$ the unique strong solution of the SDE
\begin{equation}\label{eq:square_bessel_sde}
Z_{t} = Z_0 + \delta t + 2\int_{[0, t]}\sqrt{|Z_t|}\, dB_s
\end{equation}
is called the square of the $\delta$--dimensional Bessel process started at $Z_0$.
\end{definition}
We refer to Chapter 11 in  \cite{RY99} for the basic properties of this equation.
In particular, it is well--known that although the diffusion coefficient is non--Lipschitz for equation \eqref{eq:square_bessel_sde}
\textit{pathwise} uniqueness holds and, moreover, with probability one the solution is non-negative.
The process $\sqrt{Z_t}$ is called the $\delta$--dimensional Bessel process started at 
$\sqrt{Z_0}$.

Now let us introduce for $\delta > 1$ the Bessel SDE
\begin{equation}\label{eq:bessel_sde}
X_{t} = X_{0} + \int_{[0, t]}\mathbbm{1}_{X_{s} \neq 0}\frac{\delta - 1}{2X_s}\,ds +  B_t.
\end{equation}
One can verify that for $\delta > 1 $ the process  $\sqrt{Z_t}$ satisfies \eqref{eq:bessel_sde} with
$X_0 = \sqrt{Z_0}$.

The next theorem was obtained by A. Cherny in \cite{Cherny2000}.
\begin{theorem}\label{th:cherny_uniqueness_theorem}
 For  SDE \eqref{eq:bessel_sde}
\begin{enumerate}
\item if $\delta > 1$, $X_0 \geq 0$ then the $\delta$--dimensional Bessel process
is the unique non--negative solution,
moreover, it is a strong solution,
\item if $\delta \geq 2$, $X_0 \neq 0$ then pathwise uniqueness holds,
\item if $1 < \delta < 2 $ or $X_0 = 0$ then there exist other strong solutions 
with the same $X_0$ and $B$, there exist weak solutions which are not strong, the uniqueness in law does not
hold.
\end{enumerate}
\end{theorem}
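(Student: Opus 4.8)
The plan is to reduce the Bessel equation \eqref{eq:bessel_sde} to the squared Bessel equation \eqref{eq:square_bessel_sde}, for which pathwise uniqueness and strong solvability are already available. For item~(1), let $X\ge 0$ solve \eqref{eq:bessel_sde} with $X_0\ge 0$ and write $X=X_0+V+B$ with $V_t=\int_{[0,t]}\mathbbm 1_{X_s\neq 0}\tfrac{\delta-1}{2X_s}\,ds$ of finite variation, so that $X$ is a continuous semimartingale with $\langle X\rangle_t=t$. The occupation times formula then gives $\int_{[0,t]}\mathbbm 1_{X_s=0}\,ds=\int_{[0,t]}\mathbbm 1_{\{0\}}(X_s)\,d\langle X\rangle_s=\int_{\R}\mathbbm 1_{\{0\}}(a)\,L^a_t(X)\,da=0$ a.s., i.e.\ $X$ spends no time at $0$. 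Applying It\^o's formula to $x\mapsto x^2$ and using this together with $X_s=\sqrt{X_s^2}$ (valid since $X\ge 0$), one checks that $Z:=X^2$ solves \eqref{eq:square_bessel_sde} with $Z_0=X_0^2$ and the same $B$. Pathwise uniqueness for \eqref{eq:square_bessel_sde} identifies $Z$ with the square of the $\delta$-dimensional Bessel process started at $X_0$; since $X\ge 0$, $X=\sqrt Z$ is that Bessel process, and it is a strong solution because the solution of \eqref{eq:square_bessel_sde} is strong.

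For item~(2) assume $\delta\ge 2$ and $X_0\neq 0$; by oddness of the drift we may take $X_0>0$, replacing $(X,B)$ by $(-X,-B)$ otherwise. On $(0,\infty)$ the drift is locally Lipschitz, so on each stochastic interval $[0,\sigma_n)$, $\sigma_n:=\inf\{t:X_t\notin(1/n,n)\}$, any solution coincides with the $\delta$-dimensional Bessel process $R$ started at $X_0$ by local pathwise uniqueness, and therefore the $\sigma_n$ attached to $X$ and to $R$ agree. For $\delta\ge 2$ the process $R$ started away from $0$ neither reaches $0$ (its scale function diverges at $0$) nor explodes, so $\sup_n\sigma_n=\infty$ a.s., whence every solution equals $R$.

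For item~(3), in each case ($1<\delta<2$, or $X_0=0$ with $\delta>1$) the Bessel process started at $X_0$ reaches $0$: at $\tau:=0$ if $X_0=0$, and at an a.s.\ finite time $\tau$ if $1<\delta<2$ and $X_0>0$, by recurrence. Let $R$ be the non-negative strong solution driven by $B$ from item~(1), set $\theta_t:=B_{\tau+t}-B_\tau$ (a Brownian motion by the strong Markov property), let $\rho$ be the $\delta$-dimensional Bessel process started at $0$ driven by $-\theta$, and let $Y_t:=R_t$ for $t\le\tau$ and $Y_t:=-\rho_{t-\tau}$ for $t\ge\tau$. Since $R_\tau=0=-\rho_0$ this is continuous, and by oddness of the drift (and because a start at $0$ requires no local-time correction) $Y$ again solves \eqref{eq:bessel_sde} with the same $X_0$ and $B$. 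As $\tau$ is an $\overline{F^B}$-stopping time and $\rho$ is a measurable functional of $-\theta$, $Y$ is adapted to the completed Brownian filtration and is a strong solution, distinct from $R$ (it is eventually non-positive); thus pathwise uniqueness fails and $\mathrm{Law}(Y)\neq\mathrm{Law}(R)$ shows uniqueness in law fails. Finally, enlarging the space by an independent i.i.d.\ family of signs indexed by the excursion intervals of a Bessel process away from $0$ and letting $X$ be the corresponding signed process, the process $B'_t:=X_t-X_0-\int_{[0,t]}\mathbbm 1_{X_s\neq 0}\tfrac{\delta-1}{2X_s}\,ds$ is a Brownian motion by L\'evy's characterization, so $(B',X)$ is a weak solution which, by a noise-theoretic argument in the spirit of Tsirelson's example, is not adapted to the completed filtration of $B'$ and hence is not strong.

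I expect two steps to carry the real weight. The first is the reduction in item~(1): although the occupation times identity disposes of the time spent at $0$ cleanly once one knows $\langle X\rangle_t=t$, one must still verify that no singular drift is concealed at $0$ in the It\^o computation, i.e.\ that $L^0(X)\equiv 0$ for every solution. The second is in item~(3): a sign change producing a genuinely different solution must be implemented by reflecting the \emph{driving noise} at the hitting time $\tau$ of $0$ and negating the resulting Bessel path --- doing it excursion by excursion would destroy the identity with $B$, since $B$ strictly decreases across each excursion --- and the failure of strongness for the randomized weak solution is a true noise-theoretic statement rather than a formal manipulation.
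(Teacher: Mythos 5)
A preliminary remark: the paper does not prove this statement at all --- Theorem \ref{th:cherny_uniqueness_theorem} is quoted from Cherny \cite{Cherny2000} --- so your argument can only be compared with the cited literature, not with an in-paper proof. Your items (1) and (2) are essentially correct and standard: the occupation-time identity gives $\int_0^t\mathbbm{1}_{\{X_s=0\}}\,ds=0$, squaring a nonnegative solution reduces \eqref{eq:bessel_sde} to \eqref{eq:square_bessel_sde} where Yamada--Watanabe yields pathwise uniqueness and strongness, and the localization argument for $\delta\ge2$, $X_0\neq0$ (local Lipschitz drift away from $0$ plus the fact that the Bessel process neither hits $0$ nor explodes) is the usual one. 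In item (3), the sign flip at the hitting time $\tau$ of $0$ does produce a second strong solution with the same $B$ and shows failure of uniqueness in law; two small repairs are needed ($X_0<0$ should be reduced by the symmetry $(X,B)\mapsto(-X,-B)$, and the ``strong existence'' half of item (1) silently uses the fact, asserted in the paper just before the theorem, that $\sqrt{Z}$ solves \eqref{eq:bessel_sde} even for $1<\delta<2$, i.e.\ that no local-time term appears there).

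The genuine gap is the claim ``there exist weak solutions which are not strong''. You only assert non-strongness of the excursion-sign-randomized solution ``by a noise-theoretic argument in the spirit of Tsirelson's example''; that assertion is the entire content of the claim, and it is not supplied. Moreover, your choice of randomizing excursion by excursion makes it genuinely hard to supply: the natural symmetry argument --- flip the sign of one excursion and argue that $B'$ cannot detect it --- breaks down, because flipping one sign changes $B'=\int H\,dB$ on and after that excursion, while compensating by flipping the increments of $B$ there changes $R$ itself (it is a functional of $B$); so it is not evident, and would need a real proof, that the conditional law of the signs given $\overline{\mathcal{F}^{B'}}$ stays nondegenerate. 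The clean route (essentially Cherny's, and really a Tanaka-type extra-randomness argument rather than a Tsirelson-type one) is a \emph{single} independent symmetric sign $\eta$ at $\tau$: set $X:=R$ on $[0,\tau]$ and $X_{\tau+s}:=\eta\,\rho_s$, with $\rho$ the Bessel process from $0$ driven by an independent Brownian motion $\theta$, and let $B'$ equal $B$ on $[0,\tau]$ with increments $\eta\,d\theta_s$ afterwards. Then $B'$ is a Brownian motion for the enlarged filtration, the transformation $(\eta,\theta)\mapsto(-\eta,-\theta)$ preserves the law of the data and fixes $B'$, hence conditionally on $\overline{\mathcal{F}^{B'}}$ the post-$\tau$ sign of $X$ is $\pm1$ with probability $1/2$ each, and $X$ cannot be adapted to $\overline{\mathcal{F}^{B'}}$. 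Until you either prove non-strongness for your excursion-wise construction or switch to this single-sign construction, the ``weak but not strong'' part of item (3) remains unproven.
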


\begin{remark} For $\delta \geq 2$ if $(X, B)$ is a weak solution
to SDE \ref{eq:bessel_sde} then $X_t$ does not change its sign on 
$(0, \infty)$. This follows by the classical comparison theorem (see e.g Theorem 3.7 in \cite{RY99} or Proposition 2.18 in \cite{KS98}) applied to SDE \ref{eq:square_bessel_sde} and taking into account that for $\delta = 2$ one has the identity
$\mathrm Law(Z) = \mathrm Law(|\widetilde{B}|)$, where 
$\widetilde{B}$ is a standard $2$--dimensional Brownian motion.
\end{remark}

Now let us recall the SDE for the $3$--Bessel bridge with the terminal value~$1$:
\begin{equation}\label{eq:3_bessel_bridge}
X_{t} = X_0 + \int_{[0, t]} \Bigl(\frac{1 - X_s}{1 - s} + \frac{1}{X_s}\Bigr)\,ds + B_{t}, 
\ X_0 \geq 0, \ t \in [0, 1]
\end{equation}
see e.g.  equation (29) on p.~274 in \cite{Pit99}. 
In fact, we will be interested in the SDE
\begin{equation}\label{eq:main_helper_sde}
 dX_t = f(t, X_t)\,dt + dB_t, \ X_0 = x_0 \in \R,\ t \in [0, 1],
\end{equation}
where 
$$
f(t, x) := \mathbbm{1}_{\{x > 0\}}
\Bigl(
\frac{1 - x}{1 - t} + \frac{1}{x}
\Bigr)
- 
\mathbbm{1}_{\{x < 0\}}
\Bigl(
\frac{1 - (-x)}{1 - t} + \frac{1}{-x}
\Bigr).
$$
As we shall see in the next proposition for $x_0 = 0$ SDE \ref{eq:main_helper_sde} allows to select the 
$3$--Bessel bridge or the $(-1)\times 3$--Bessel bridge as its solution.
\begin{proposition}\label{pr:helper_equation_properties}
For SDE \eqref{eq:main_helper_sde}
\begin{enumerate} 
\item For $x_0 = 0$ there is a unique nonnegative weak solution 
(unique in the \textit{pathwise} sense among all nonnegative solutions),
and  analogously there is a unique nonpositive weak solution.
Moreover, these solutions are \textit{strong} solutions.
\item For $x_0 = 0$ any weak solution with probability one preserves its sign for $t \in (0, 1]$, in particular, never reaches $0$ for $t \in (0, 1]$ and equals $1$~or~$-1$ when $t = 1$.
\end{enumerate}
\end{proposition}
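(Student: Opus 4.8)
The plan is to exploit the odd symmetry $f(t,-x)=-f(t,x)$, so that $(X,B)\mapsto(-X,-B)$ sends solutions of \eqref{eq:main_helper_sde} with $X_0=0$ to solutions of the same equation; this reduces the nonpositive case of (1) to the nonnegative one and, in (2), lets me work with $Y:=|X|$. I would first record two preliminaries. (i) A weak solution spends no time at the origin: since $\mathrm d\langle X\rangle_t=\mathrm dt$, the occupation times formula gives $\int_0^T\mathbbm 1_{\{X_s=0\}}\,\mathrm ds=\int_{\R}\mathbbm 1_{\{0\}}(a)L_T^a(X)\,\mathrm da=0$ a.s., so $X_s\neq0$ for a.e.\ $s$, and there the drift equals $\tfrac{1-|X_s|}{1-s}+\tfrac1{|X_s|}$ carrying the sign of $X_s$. (ii) By continuity and $X_0=0$, near $s=0$ the term $\tfrac{1-|X_s|}{1-s}$ stays bounded, so the finiteness of $\int_0^T|f(s,X_s)|\,\mathrm ds$ (part of the definition of a solution) forces $\int_0^\varepsilon\mathbbm 1_{\{X_s\neq0\}}\tfrac1{|X_s|}\,\mathrm ds<\infty$.

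For (1), existence I would obtain from the classical $3$--Bessel bridge $R$ from $0$ to $1$ on $[0,1]$ (\cite{RY99}, Ch.~XI; \cite{Pit99}): it is a.s.\ strictly positive on $(0,1)$, satisfies $R_1=1$, and solves \eqref{eq:3_bessel_bridge} with $X_0=0$, its drift being integrable because $R_s$ is of order $\sqrt s$ near $0$ and $1-R_s$ is of order $\sqrt{1-s}$ in $L^1$ near $1$; since $R>0$ on $(0,1)$ it also solves \eqref{eq:main_helper_sde}. For pathwise uniqueness inside the nonnegative class I would take two nonnegative solutions $X,\widetilde X$ on the same space driven by the same $B$, set $D:=X-\widetilde X$, note that both drift integrals being finite makes $D$ absolutely continuous with $D_0=0$ and $\dot D_s=f(s,X_s)-f(s,\widetilde X_s)$, and use that $x\mapsto\tfrac{1-x}{1-s}+\tfrac1x$ is strictly decreasing on $(0,\infty)$ together with (i) to get $D_s\dot D_s\leq0$ for a.e.\ $s$; hence $D_t^2=2\int_0^tD_s\dot D_s\,\mathrm ds$ is nonincreasing in $t$ and $D\equiv0$. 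Weak uniqueness inside the nonnegative class, and the existence of a nonnegative \emph{strong} solution (necessarily $R$), then follow from the Yamada--Watanabe coupling applied within this class.

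For (2), take an arbitrary weak solution $X$ with $X_0=0$ and $Y:=|X|$. It\^o--Tanaka and L\'evy's theorem ($\widetilde B_t:=\int_0^t\operatorname{sgn}(X_s)\,\mathrm dB_s$ is a Brownian motion) give $\mathrm dY_t=\mathbbm 1_{\{Y_t\neq0\}}\bigl(\tfrac{1-Y_t}{1-t}+\tfrac1{Y_t}\bigr)\mathrm dt+\mathrm d\widetilde B_t+\mathrm dL_t$ with $L=L^0(X)$ nondecreasing and carried by $\{Y=0\}$. Squaring, using $\int Y\,\mathrm dL=0$ (so the local time drops out) and $Y_s\neq0$ for a.e.\ $s$, I obtain
\begin{equation*}
Z_t:=X_t^2=\int_0^t\Bigl(\frac{2\sqrt{Z_s}-2Z_s}{1-s}+3\Bigr)\mathrm ds+\int_0^t2\sqrt{Z_s}\,\mathrm d\widetilde B_s ,
\end{equation*}
a squared-Bessel-type equation whose drift is $\geq3$ as long as $Z_s\leq1$. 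I would then compare $Z$, on the excursions of $Z$ below level $1$, with a $2$--dimensional squared Bessel process driven by $\widetilde B$ — truncating the drift of $Z$ outside $[0,1]$ so that the comparison theorem for one-dimensional equations with H\"older-$\tfrac12$ diffusion coefficient applies — and use that $\mathrm{BESQ}^2$ from $0$ is a.s.\ positive on $(0,\infty)$ while $\mathrm{BESQ}^2$ from a positive point a.s.\ never reaches $0$, to conclude $Z_t>0$, i.e.\ $X_t\neq0$, for all $t\in(0,1)$. By continuity $X$ then keeps a constant sign on $(0,1)$; if $X>0$ there then $X$ is a nonnegative weak solution, so by (1) it has the law of $R$ and $X_1=1$, and if $X<0$ then $-X$ has the law of $R$ and $X_1=-1$; in particular $X$ does not vanish on $(0,1]$.

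I expect the comparison step in (2) to be the main obstacle: the drift $\tfrac{2\sqrt z-2z}{1-t}+3$ exceeds the $\mathrm{BESQ}^2$ drift $2$ only on $\{z\leq1\}$ (it may dip below $2$ once $z>1$), so the comparison cannot be applied globally and must be run separately on each excursion of $Z$ strictly below level $1$ — an excursion starting either at time $0$ with value $0$, or at an intermediate level in $(0,1)$, in which case the comparison $\mathrm{BESQ}^2$ starts at a positive point and hence never returns to $0$, which is exactly what forbids $Z$ to come back to $0$ after leaving it. Organizing this excursion bookkeeping rigorously — together with preliminary (ii), which is what makes the It\^o--Tanaka computation legitimate near $t=0$ — is the delicate part; the rest is routine manipulation of standard facts about Bessel processes and bridges.
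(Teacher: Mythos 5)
Your argument is correct in substance but takes a genuinely different route from the paper in both parts. For part (1) the paper removes the $\tfrac{1}{1-t}$ portion of the drift by a Girsanov change of measure, passes to $Y=X^2$, and invokes the Yamada--Watanabe criterion for the resulting squared--Bessel--bridge equation (Lipschitz drift, H\"older-$\tfrac12$ diffusion), obtaining uniqueness and strongness from the Yamada--Watanabe theorem; you instead get pathwise uniqueness in the nonnegative class directly from the monotonicity of $x\mapsto\tfrac{1-x}{1-t}+\tfrac1x$ on $(0,\infty)$ together with the zero-occupation-time observation, which is more elementary (no change of measure), and the Yamada--Watanabe coupling restricted to the nonnegative class does deliver strongness as you assert. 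For part (2) the paper introduces $\tau_n=\inf\{t>0:|X_t|=1/n\}$, shows $\tau_n\downarrow 0$ a.s.\ (otherwise $X\equiv 0=B$ on an interval), and reapplies the part-(1) uniqueness to the equation restarted at $\tau_n$ from a nonzero value, so sign preservation and $X_1=\pm1$ come directly from the restarted bridge; you instead pass to $Z=X^2$ and compare with $\mathrm{BESQ}^2$ below level $1$. Your scheme works, but two points should be tightened, both with tools you already have: (a) the left endpoints of the excursions of $Z$ below $1$ are not stopping times, so the comparison should be launched from deterministic rational times $q$ with $Z_q\in(0,1)$, showing that after such a $q$ the process cannot reach $0$ before reaching $1$; and (b) the closing step ``if $X>0$ on $(0,1)$ then $X$ is a nonnegative weak solution, so it has the law of $R$'' is only an event-wise statement and not literally an application of (1); it is cleaner to note that, since a.s.\ $X\neq0$ on $(0,1)$, the local time $L^0(X)$ vanishes there, so $|X|$ is an honest nonnegative weak solution driven by $\widetilde B$, whence by (1) it coincides a.s.\ with the $3$--Bessel bridge, giving $|X_1|=1$ and the sign statement on all of $(0,1]$. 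The paper's restart argument avoids both of these bookkeeping issues, while your route has the merit of avoiding Girsanov and exhibiting the mechanism (drift at least that of a critical-dimension Bessel process) behind the non-return to zero.
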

\begin{proof}
Let us notice that there is a nonnegative \textit{weak} solution to
 SDE \eqref{eq:main_helper_sde} given by the $3$-Bessel bridge with the terminal value $1$  and as it is well-known this solution with probability $1$ never reaches $0$ for $t \in (0, 1]$. The corresponding nonpositive solution is given by the $(-1)\times 3$--Bessel bridge.
Now let us establish \textit{pathwise} uniqueness in the class of nonnegative solutions, the case of nonpositive solutions is handled completely analogously. One can see that it is sufficient to establish \textit{pathwise} uniqueness on every interval $[0, T]$, $T \in (0, 1)$.
Let us assume that $(X_1, B)$, $(X_2, B)$ are two \textit{weak} solutions on $[0, T]$ to SDE \eqref{eq:main_helper_sde}  
defined on some filtered probability space 
$(\Omega, \mathcal F, (\mathcal F_t)_{t \geq 0})$
with the same Brownian motion $B$ and $X_1, X_2 \geq 0$ a.s. 
Let us set
$$
\varrho_{T} := \exp\Biggl(
-\int_{[0, T]}\frac{1}{1 - t}\, dt - 
\frac{1}{2}
\int_{[0, T]}\frac{1}{(1 - t)^2}\,dB_t
\Biggr).
$$
Then a.s. $\varrho_T > 0$ and $\mathbb{E}\varrho_T = 1$.
By Girsanov's theorem under the new measure 
$$
 dQ := \varrho_T\, dP
$$
the process 
$$
\widetilde{B}_t := \int_{[0, t]}\frac{1}{1 - s}\, ds + B_t, \ t \in [0, T].
$$
is a Brownian motion with respect to the same filtration 
$(\mathcal F_t)_{t \geq 0}$. It is also clear that the filtrations
$(\overline{\mathcal{F}^{B}}_t)_{t \in [0, T]}$ and
$(\overline{\mathcal{F}^{\widetilde{B}}}_t)_{t \in [0, T]}$ coincide.
Now one can see that on the probability space
$(\Omega, \mathcal F, (\mathcal F_t)_{t \in [0, T]}, Q)$
the processes $(X_1, \widetilde{B})$, $(X_2, \widetilde{B})$ are \textit{weak}
nonnegative solutions to the SDE
\begin{equation}\label{eq:3_bessel_bridge}
X_{t} = \int_{[0, t]} \mathbbm{1}_{\{X_t > 0\}} \Bigl(\frac{- X_t}{1 - t} + \frac{1}{X_t}\Bigr)\, dt + \widetilde{B}_{t},
\ X_0 = 0, \ t \in [0, T].
\end{equation}
Let us define $Y_{1, t} := X^{2}_{1, t}$, 
$Y_{2, t} := X^{2}_{2, t}$.  Applying Ito's formula one can show that
$(Y_{1, t}, \widetilde{B})$ and $(Y_{2, t}, \widetilde{B})$ are
\textit{weak}
nonnegative solutions to the SDE
\begin{equation}\label{eq:square_bessel_bridge_sde}
Y_{t} = \int_{[0, t]}\frac{-2Y_t}{1 - t}\, dt + 2t + \int_{[0, t]}\sqrt{|Y_t|}\,d\widetilde{B}_t, \ t \in [0, T],
\end{equation}
where we have also used the occupation time formula for semimartingales (see e.g.~\cite{RY99}, Ch. 7) applied to $X = X_1, X_2$:
$$
\int_{[0, t]}\mathbbm{1}_{\{X_s = 0\}}\, ds =
\int_{\mathbb{R}}\mathbbm{1}_{\{x = 0\}}L^{x}_{t}(X)\, dx = 0, \ t \in [0, T].
$$
For SDE \eqref{eq:square_bessel_bridge_sde} the classic Yamada--Watanabe condition is statisfied and \textit{pathwise} uniqueness follows, see e.g.
\cite{RY99}, Ch. 9, Theorem 3.5. Consequently, by the Yamada--Watanabe theorem there is a unique \textit{weak} solution to \eqref{eq:square_bessel_bridge_sde} and the solution is \textit{strong}. 
Since $X_{1, t} = \sqrt{|Y_{1, t}|}$ and $X_{2, t} = \sqrt{|Y_{2, t}|}$ this easily gives the required a.s. equality 
$$
X_{1, t} = X_{2, t},\ t \in [0, T],
$$
and applying the Yamada--Watanabe theorem we obtain that
the solution $(X_1, \widetilde{B})$ is \textit{strong}.
Taking into account the equality
$$
(\overline{\mathcal{F}^{B}}_t)_{t \in [0, T]} =
(\overline{\mathcal{F}^{\widetilde{B}}}_t)_{t \in [0, T]}
$$
we prove the first claim of
Proposition~\ref{pr:helper_equation_properties}.

Let $(X, B)$ be a \textit{weak} 
solution to SDE \eqref{eq:main_helper_sde} with $x_0 = 0$.
Let us define the nonincreasing sequence of Markov moments $\{\tau_n\}$ 
$$
\tau_n := \inf \bigl\{t > 0: |X_{\tau_n}| = 1/n\bigr\}.
$$
Since $f(t, 0) \equiv 0$ for any $t' > 0$ on the set 
$$
\Omega_{t'} := \{\lim_{n \to \infty}\tau_n > t'\}
$$
we a.s. have the equalities
$$
X_{t} = 0, \ X_t = B_t, \ t \in [0, t'],
$$
that is possible only if $P(\Omega_{t'}) = 0$. Consequently,
$\lim_{n \to \infty}\tau_n = 0$ a.s.
It is easy to notice that the arguments presented above in
the case $x_0 = 0$ similarly yield \textit{pathwise} uniqueness
for the SDE
$$
Z_{t} = Z_{\tau} + \int_{[\tau, t]}f(s, Z_{s})\, ds +  B_t - B_{\tau}, \ t \in [\tau, T],
$$
as soon as $Z_{\tau} \neq 0$ with probability $1$ on the set 
$\{\tau < T\}$.
Applying this observation to $\tau := \tau_n\wedge T$ for each $n$
and taking into account that the $3$--Bessel bridge does not change its sign we obtain the desired claim.
\end{proof}

We will also need the following classic example of a singular SDE which has no solutions at all.

\begin{proposition}
The SDE
\begin{equation}\label{eq:no_solutions}
X_{t} = \int_{[0, t]}\mathbbm{1}_{\{X_{s}\neq 0\}}\frac{-1}{2X_s}\,
ds + B_t, \ X_0 = 0.
\end{equation}
has no \textit{weak} solutions.
\end{proposition}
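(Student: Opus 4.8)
The plan is to argue by contradiction. Suppose $(X, B)$ is a \textit{weak} solution to \eqref{eq:no_solutions} on some filtered probability space; I will show that this forces $B$ to be identically zero, which is absurd. The device is to follow the squared process $Y_t := X_t^2$.

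First note that $X$ is a continuous semimartingale: its finite-variation part $\int_{[0,t]}\mathbbm{1}_{\{X_s\neq 0\}}(-1/2X_s)\,ds$ is well defined, hence of finite variation, a.s.~by the very definition of a solution, and consequently $\langle X\rangle_t = t$. It\^o's formula applied to $f(x) = x^2$, together with the identity $2x\cdot\mathbbm{1}_{\{x\neq 0\}}\tfrac{-1}{2x} = -\mathbbm{1}_{\{x\neq 0\}}$, then yields
$$
Y_t = -\int_{[0,t]}\mathbbm{1}_{\{X_s \neq 0\}}\,ds + t + 2\int_{[0,t]} X_s\,dB_s = \int_{[0,t]}\mathbbm{1}_{\{X_s = 0\}}\,ds + 2\int_{[0,t]} X_s\,dB_s .
$$
By the occupation time formula for semimartingales — exactly as used above in the proof of Proposition~\ref{pr:helper_equation_properties} — one has $\int_{[0,t]}\mathbbm{1}_{\{X_s = 0\}}\,ds = \int_{\mathbb{R}}\mathbbm{1}_{\{x=0\}}L^x_t(X)\,dx = 0$ a.s. Hence $Y_t = 2\int_{[0,t]} X_s\,dB_s$ is a continuous local martingale (the integrand being locally bounded by path continuity) with $Y_0 = X_0^2 = 0$.

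Since $Y_t = X_t^2 \geq 0$, this nonnegative continuous local martingale is a supermartingale by Fatou's lemma, so $\mathbb{E}\,Y_t \leq \mathbb{E}\,Y_0 = 0$ and therefore $Y_t = 0$ a.s.~for each $t$; by path continuity $Y \equiv 0$, i.e.~$X_t = 0$ for all $t$ a.s. But then the drift integral in \eqref{eq:no_solutions} vanishes identically, as its integrand is supported on $\{X_s\neq 0\}$, and the defining equation collapses to $B_t = X_t = 0$ for all $t$ a.s., contradicting the fact that $B$ is a Brownian motion. There is no real obstacle in this argument; the only points worth a word of justification are that a solution is automatically a semimartingale with $\langle X\rangle_t = t$ (immediate, as noted) and the elementary fact that a nonnegative local martingale is a supermartingale.
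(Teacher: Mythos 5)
Your argument is correct. Note that the paper does not actually prove this proposition --- it simply refers to Example~2.1 in Cherny's paper \cite{Cherny2001} --- so what you have written is a complete, self-contained substitute for that citation, and it is essentially the standard argument behind Cherny's example: the drift $-1/(2x)$ is the Bessel drift with $\delta=0$, so squaring the would-be solution produces (after killing the $\mathbbm{1}_{\{X_s=0\}}$ term via the occupation-time formula, exactly as in the proof of Proposition~\ref{pr:helper_equation_properties}) a nonnegative continuous local martingale started at $0$, which must vanish identically, forcing $X\equiv 0$ and hence $B\equiv 0$. All the small points you flag --- absolute convergence of the drift integral by the definition of a solution, $\langle X\rangle_t=t$, and the supermartingale property of nonnegative local martingales --- are handled correctly.
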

\begin{proof}
For the proof see Example 2.1 in \cite{Cherny2001}.
\end{proof}

\section{Equation without adapted solutions}

\begin{theorem}\label{th:no_adapted_solutions}
There exists a Borel mapping $b = (b_1, b_2): [0, 2] \times \mathbb{R}^2 \to \mathbb{R}^2$,
\begin{equation}\label{eq:no_adapted_solutions}
X_{t} = \int_{[0, t]}b(s, X_s)\, ds + B_t,\ X_0 = 0, \ t \in [0, 2]
\end{equation}
such that
\begin{enumerate}
\item there exists a set of Brownian trajectories $\Omega$ 
of full measure, such that for every $B \in \Omega$  integral equation \eqref{eq:no_adapted_solutions} has at least one solution
(understood in the pure ODE sense) defined on the whole interval $[0, 2]$,
\item equation \eqref{eq:no_adapted_solutions} considered as a SDE
has no weak solutions $(X, B)$ defined on the whole  interval $[0, 2]$. 
\end{enumerate}
\end{theorem}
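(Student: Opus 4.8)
The plan is to work in dimension $d=2$, to split $[0,2]$ as $[0,1]\cup[1,2]$, and to force any adapted solution to commit on $[0,1]$ to a choice it cannot possibly get right. On $[0,1]$ take $b(t,x_1,x_2)=(f(t,x_1),0)$ with $f$ as in \eqref{eq:main_helper_sde}; then the first coordinate of any solution solves \eqref{eq:main_helper_sde} with $x_0=0$, so by Proposition~\ref{pr:helper_equation_properties} it keeps its sign on $(0,1]$ with $X^1_1\in\{-1,+1\}$, while $X^2=B^2$ on $[0,1]$. The key observation is that for an \emph{adapted} solution the sign $S:=\operatorname{sgn}(X^1_1)$ is forced to be measurable with respect to $\bigcap_{t>0}\mathcal F_t$ (because $\operatorname{sgn}(X^1_t)=S$ for all $t\in(0,1]$), and this $\sigma$-algebra is independent of the whole Brownian motion $B=(B^1,B^2)$: letting $t\downarrow0$ in the independence of $(B_{s_1}-B_t,\dots,B_{s_k}-B_t)$ from $\mathcal F_t$ gives $B\perp\bigcap_{t>0}\mathcal F_t$. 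In particular $S$ is independent of $B^2_1=X^2_1$, so $\P\big(\operatorname{sgn}(X^1_1)\neq\operatorname{sgn}(B^2_1)\big)=\tfrac12$ for every adapted solution.

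On $[1,2]$ the drift is to be built so that \eqref{eq:no_adapted_solutions} \emph{has a solution on $[1,2]$ when $X_1$ lies in the open first or third quadrant} (matching signs), but \emph{has no solution on all of $[1,2]$ when $X_1$ lies in the open second or fourth quadrant} (opposite signs). Granting this, part (2) follows: an adapted solution on $[0,2]$ restricted to $[0,1]$ is an adapted solution of \eqref{eq:main_helper_sde}, so with probability $\tfrac12$ one lands on $\{\operatorname{sgn}(X^1_1)\neq\operatorname{sgn}(B^2_1)\}$, where no continuation to time $2$ exists --- a contradiction. Part (1) is then immediate: for a.e.\ Brownian path $(B^1,B^2)$ put $\tau:=\operatorname{sgn}(B^2_1)$, solve \eqref{eq:main_helper_sde} on $[0,1]$ by the $\tau$-signed $3$-Bessel bridge (a strong, hence path-by-path, solution, available for a.e.\ path) and take $X^2=B^2$ on $[0,1]$; then $X_1=(\tau,B^2_1)$ has matching signs and lies in the ``solvable'' regime, so $X$ extends to $[0,2]$.

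The crux is the construction on $[1,2]$, and the obstacle is to make ``no continuation when the signs disagree'' immune to a coordinate crossing an axis and escaping into the ``solvable'' regime --- something that, as a one-dimensional trap at a point cannot prevent, genuinely needs $d=2$. First, in a thin neighbourhood $\{0<|x_i|<\varepsilon_0\}$ of each coordinate axis put, in the direction normal to that axis, a Bessel drift $\tfrac{\delta-1}{2\,(\cdot)}$ with $\delta\ge2$ (repelling from the axis on both sides); by the comparison argument used for \eqref{eq:square_bessel_sde} this keeps $X^1$ and $X^2$ away from $0$ on $[1,2]$, so each open quadrant is invariant and no escape is possible. Inside the first and third quadrants let the remaining drift vanish; there the equation decouples into two one-dimensional $\delta\ge2$ Bessel-type SDEs, which have strong solutions that neither hit $0$ nor explode --- this is the ``solvable'' regime used above. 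Inside the second quadrant let the remaining drift be $-(x-p)/|x-p|^2$ on $\{x\neq p\}$ and $0$ at $p$, where $p:=(-1,1)$, and symmetrically with $p':=(1,-1)=-p$ in the fourth quadrant, so that $b$ is odd in $(x_1,x_2)$.

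It remains to analyse the second-quadrant dynamics (the fourth being symmetric). By Ito's formula, the occupation-times formula and the computation underlying \eqref{eq:no_solutions} and \eqref{eq:square_bessel_sde} with $\delta=0$, the process $R_t:=|X_t-p|^2$ satisfies $dR_t=h_t\,dt+2\sqrt{R_t}\,d\beta_t$ for a Brownian motion $\beta$, where $h_t\le0$ (the axis corrections, being placed with $\varepsilon_0<1$ on the far side of $p$ from the axes, can only lower $R$), and near $p$ one has $h\equiv0$, i.e.\ $R$ is there exactly a squared $0$-dimensional Bessel process. Hence, starting in the open second quadrant, with positive probability the flow toward $p$ (perturbed by a small Brownian increment) brings $X$ into a small ball around $p$ and then $R$, a nonnegative local martingale there, reaches $0$ before time $2$; once $X$ hits $p$, $R$ is a nonnegative local martingale started at $0$, so $R\equiv0$ and $X\equiv p$ thereafter, and from $dX=b(t,X)\,dt+dB$ with $b(t,p)=0$ this forces $B$ to be constant on a non-degenerate interval --- impossible. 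Intersecting this positive-probability event with $\{\operatorname{sgn}(X^1_1)\neq\operatorname{sgn}(B^2_1)\}$ (still positive, by the independence of $S$ from $B$) yields the contradiction claimed in the second paragraph. The points requiring care are the invariance of the quadrants under $b$, the sign of the axis-correction drift in $R$, the fact that $X$ reaches a neighbourhood of $p$ with positive probability, and the local integrability of $b$ along the constructed solutions (which follows from the corresponding integrability for Bessel and $3$-Bessel-bridge paths). I expect this last part --- making ``sign mismatch $\Rightarrow$ no continuation'' robust against escape, via the two-dimensional no-solution trap --- to be the main obstacle.
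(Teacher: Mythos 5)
Your proposal is correct in its overall logic and shares the paper's first ingredient: on $[0,1]$ one coordinate is forced through the drift of \eqref{eq:main_helper_sde} to be a signed $3$--Bessel bridge, and the sign of that bridge, being constant on $(0,1]$, is $\mathcal F_{0+}$--measurable for any adapted solution and hence independent of the driving Brownian motion (the paper's Theorem~\ref{th:no_adapted_solutions} phrases this instead as ``$\operatorname{sgn}(B_{1,1})$ is not $\mathcal F_{1/2}$--measurable'', and your $\mathcal F_{0+}$ argument is exactly the one the paper uses later in Theorem~\ref{th:no_adapted_solutions_inf}). Where you genuinely diverge is the mechanism on $[1,2]$. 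The paper stays one--dimensional: the formerly Brownian coordinate receives the $3$--Bessel drift $1/x_1$, so by Theorem~\ref{th:cherny_uniqueness_theorem} it preserves its sign, and the bridge coordinate, which starts exactly at $\pm 1$, is fed either Cherny's no--solution equation \eqref{eq:no_solutions} (shifted to $\pm1$) or a benign Bessel equation according to the sign pattern; non--existence is immediate at time $1$, before any regime switch could occur, so your premise that a one--dimensional ``trap at a point cannot prevent escape'' and that one ``genuinely needs $d=2$'' is not accurate. Your alternative --- axis--repelling Bessel drifts to make the open quadrants invariant, plus an attracting singular drift toward an interior point $p$ whose squared distance is a $\mathrm{BESQ}^0$--type nonnegative supermartingale, so that any global adapted solution hits $p$ with positive probability and then must freeze, contradicting non--constancy of $B$ --- does work as a scheme, but it is substantially heavier and leaves several nontrivial steps as assertions: strong existence for the truncated Bessel drifts in the ``solvable'' quadrants (needed for part (1)); non--attainability of the axes for a Bessel($\delta\ge 2$) drift perturbed by a bounded term (a scale--function/comparison argument with non--Lipschitz drift); a comparison lemma for $dR = h_t\,dt + 2\sqrt{R}\,d\beta$ with merely adapted $h_t\le 0$ against $\mathrm{BESQ}^0$; and the quantitative hitting estimate for $\mathrm{BESQ}^0$ before time $2$.

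One step needs repair as literally stated: ``once $X$ hits $p$, $R$ is a nonnegative local martingale started at $0$.'' Since you set $b(t,p)=0$, at $X=p$ the It\^o term $+2\,dt$ in $dR$ is not cancelled, so $R$ is a local martingale only off $\{X_t=p\}$. This is fixable --- the occupation--times formula applied to the semimartingale coordinate $X^1$ gives $\int \mathbbm{1}_{\{X_s=p\}}\,ds=0$ a.s., exactly as the paper uses it in the proof of Proposition~\ref{pr:helper_equation_properties}, after which $R$ is a nonnegative local supermartingale started at $0$, hence $\equiv 0$, and then either the forced constancy of $B$ or the resulting positive occupation time of $p$ yields the contradiction --- but the fix should be made explicit, since it is the crux of why hitting $p$ is impossible. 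With these repairs your route is a valid, genuinely different (and more robust--looking, but longer) proof of part (2); part (1) via the non--adapted sign selection is the same as the paper's.
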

\begin{proof}
Before proceeding to the formal proof we would like to briefly outline the strategy.
On the interval $[0, 1]$ a \textit{weak} solution to SDE~\ref{eq:no_adapted_solutions} exists and its first component is a Brownian motion while the second one is either a $3$--Bessel bridge or $(-1)\times 3$--Bessel bridge.
On $[1, 2]$ depending on the sign at $1$ of each component, either there is no \textit{weak} solution at all or there is one. Furthermore, the second component cannot change its sign. It implies that the sign of the second component on 
$[0, 1]$ depends on the sign of the first one at the end of the interval provided the solution exists on the whole interval $[0, 2]$, hence the resulting process is not adapted to the original filtration.

{\bf Step 1.}
For $t \in [0, 1]$ let us set
$$
b_1 := 0,
$$
$$
b_2 := \mathbbm{1}_{\{x_2 > 0\}}
\Bigl(
\frac{1 - x_2}{1 - t} + \frac{1}{x_2}
\Bigr)
- 
\mathbbm{1}_{\{x_2 < 0\}}
\Bigl(
\frac{1 - (-x_2)}{1 - t} + \frac{1}{-x_2}
\Bigr).
$$
In this case any solution $(X, B)$ to \eqref{eq:no_adapted_solutions}
for $t \in [0, 1]$ is of the following form:
\begin{equation}\label{eq:1_0_1}
X_{1,t} = B_{1,t},  \ t \in [0, 1]
\end{equation}
\begin{equation}\label{eq:2_0_1}
X_{2,t} = \int_{[0, t]}b_2(s, X_{2,s})\,ds + B_{2,t}, \
t \in [0, 1]
\end{equation}
Now let us remind that by 
Proposition~\ref{pr:helper_equation_properties} 
for SDE \eqref{eq:2_0_1}
\begin{enumerate}
\item there exists a unique nonnegative solution, i.e.~the $3$--Bessel bridge from $0$ to $1$,
\item there exists a unique  nonpositive solution, 
i.e.~the $(-1)\times 3$--Bessel bridge from $0$~to~$-1$,
\item any solution  does not change its sign.
\end{enumerate}

{\bf Step 2.}
Now we would like to make use of
 equation \eqref{eq:no_solutions}, which will play the role of a 
``randomized filter''.  The idea is to force the equation to ``select'' 
solutions in an non--adapted way.

For $t \in (1, 2]$ let us set
$$
b_1 := \mathbbm{1}_{\{x_1 \neq 0\}}\frac{1}{x_1}
$$
\begin{multline*}
b_2 := \mathbbm{1}_{\{x_1 > 0\}}\mathbbm{1}_{\{x_2 > 0\}}
\frac{-1}{2(x_2 - 1)} +
\mathbbm{1}_{\{x_1 < 0\}}\mathbbm{1}_{\{x_2 < 0\}}
\frac{-1}{2(x_2 + 1)} \\
+\mathbbm{1}_{\{x_1 > 0\}}\mathbbm{1}_{\{x_2 < 0\}}\frac{1}{x_2} + 
\mathbbm{1}_{\{x_1 < 0\}}\mathbbm{1}_{\{x_2 > 0\}}\frac{1}{x_2}.
\end{multline*}

Any solution $(X, B)$ to \eqref{eq:no_adapted_solutions}
for $t \in [1, 2]$ has the form
\begin{equation}\label{eq:1_2_3}
X_{1,t} =  X_{1, 1} + \int_{[1, t]}b_1(s, X_{1,s})\, ds + 
B_{1,t} - B_{1,1}
\end{equation}
\begin{equation}\label{eq:2_2_3}
X_{2,t} = X_{2, 1} + \int_{[1, t]}b_2(s, X_{1,s}, X_{2,s})\, ds + B_{2,t} - B_{2,1}.
\end{equation}
Let us remind that the drift 
$
 x \mapsto \mathbbm{1}_{\{x \neq 0\}}\frac{1}{x}
$
corresponds to the $3$--Bessel SDE, therefore
equation \eqref{eq:1_2_3} has a unique solution and this solution
preserves its sign on the interval $[1, 2]$.
In turn, equation \eqref{eq:2_2_3} has no solutions if 
$X_{1, 1} \cdot X_{2, 1} > 0$ and has a unique solution otherwise.
\vskip .1in
Let us consider a ``global'' solution 
$X$ to SDE~\eqref{eq:no_adapted_solutions} on a fixed probability space equipped with a two--dimensional Brownian motion
$B$ with respect to some filtration $\{\mathcal{F}_t\}_{t \geq 0}$.
We claim that  if $B_{1,1} > 0$ then $X_{2, t}, t \in [0, 1)$ 
must stay non--positive, while if
$B_{1,1} < 0$ then $X_{2, t}, \ t \in [0, 1)$ must stay non--negative.
Indeed, it is easy to see that if any of these conditions are violated then we have $X_{1, 1} \cdot X_{2, 1} > 0$ and this
contradicts the fact that  SDE \eqref{eq:no_solutions} has no solutions. Now it is easy to see that the random variable $X_{2, 1/2}$ cannot be measurable with respect to $\mathcal{F}_{1/2}$ because
the random variable 
$\operatorname{sgn}(B_{1, 1}) = -\operatorname{sgn}(X_{2,1/2})$ is not measurable with respect to $\mathcal{F}_{1/2}$.

Now let $\Omega_1$ be the set of Brownian trajectories of full measure
such that for every $B\in\Omega_1$ both positive and negative \textit{strong} solutions to SDE~\ref{eq:2_0_1} are defined.
Let $\Omega_2$ be the set of full measure on which the \textit{strong} solution to SDE~\ref{eq:1_2_3} is defined.
Let $\Omega_3$ be the set of full measure
on which the \textit{strong} solution to SDE~\ref{eq:2_2_3} exists
provided $\operatorname{sgn}(X_{1, 1}) = - \operatorname{sgn}(X_{2, 1})$.
Set $\Omega := \Omega_1 \cap \Omega_2 \cap \Omega_3$.
It is clear that $P(\Omega) = 1$ and for every $B \in \Omega$
there exists a function $t \mapsto X_t$ which solves  integral equation \eqref{eq:no_adapted_solutions}.
\end{proof}
\begin{remark}\label{re:measurability_remark}
Since the process $X_{2, t}$ never changes its sign for $t \in (0, 2]$
the same arguments show that for any solution $(B, X)$ to integral equation \eqref{eq:no_adapted_solutions} the following equality holds:
\begin{equation}\label{eq:measurability_identity}
\operatorname{sgn}(B_{1, 1}) = \lim_{n \to \infty}-\operatorname{sgn}(X_{2, 1/n})
\end{equation}
\end{remark}
\begin{theorem}\label{th:no_adapted_solutions_inf}
Let $B$ be a cylindrical Brownian motion. There exists a Borel mapping
$b: [0, 1] \times \mathbb{R}^{\infty} 
\to \mathbb{R}^{\infty}$ 
\begin{equation}\label{eq:no_adapted_solutions_inf}
X_{t} = \int_{[0, t]}b(s, X_s)\, ds + B_t,\ X_0 = 0, \ t \in [0, 1],
\end{equation}
such that 
\begin{enumerate}
\item there exists a set of Brownian trajectories $\Omega \subset C( [0,1], \R^\infty)$
of full measure, such that for every $B \in \Omega$  integral equation \eqref{eq:no_adapted_solutions_inf} has at least one solution defined on the whole interval $[0, 1]$,
\item  equation \eqref{eq:no_adapted_solutions_inf} considered as a SDE
has no solutions $(X, B)$ at all, even defined up to some Markov moment
$\tau$ if $\tau > 0$ on the set of positive probability. 
\end{enumerate}
\end{theorem}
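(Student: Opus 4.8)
The plan is to stack countably many suitably rescaled copies of the two--dimensional construction of Theorem~\ref{th:no_adapted_solutions}, one on each of a sequence of time windows shrinking to the origin, using disjoint pairs of the coordinates of $\R^{\infty}$. Fix scales $\lambda_n \downarrow 0$, set $c_n := \lambda_n^{2}$ and $a_n := 2\lambda_n^{2}$, so that $a_n \downarrow 0$, and denote by $B^{(n)}_1, B^{(n)}_2$ the coordinates $2n-1$ and $2n$ of the driving cylindrical noise. On the $n$--th pair $(x_{2n-1}, x_{2n})$ we install the drift obtained from the drift $b$ of Theorem~\ref{th:no_adapted_solutions} by Brownian scaling with ratio $\lambda_n$: the interval $[0,1]$ carrying the $3$--Bessel bridge is rescaled to $[0, c_n]$, the interval $[1,2]$ carrying the ``randomized filter'' is rescaled to $[c_n, a_n]$, and the drift in these two coordinates is set to $0$ for $t > a_n$. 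Distinct coordinate pairs do not interact, so this defines a single Borel map $b \colon [0,1] \times \R^{\infty} \to \R^{\infty}$, and a solution of \eqref{eq:no_adapted_solutions_inf} is precisely a sequence of solutions of the decoupled two--dimensional systems, all started at $0$ at time $0$. Brownian scaling turns the $3$--Bessel bridge from $0$ to $1$ into a $3$--Bessel bridge from $0$ to some $v_n > 0$ on $[0, c_n]$, and preserves every qualitative feature used below: sign preservation in the bridge window (Proposition~\ref{pr:helper_equation_properties}), solvability of the filter equation on $[c_n, a_n]$ exactly when the two signs at time $c_n$ are opposite, and the absence of a weak solution of the rescaled, shifted copy of \eqref{eq:no_solutions} on any nondegenerate interval.

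\emph{Path--by--path existence.} For each $n$, the rescaled Theorem~\ref{th:no_adapted_solutions}(1) yields a set $\Omega^{(n)} \subset C([0,1], \R^{\infty})$ of full Wiener measure on which the $n$--th two--dimensional system admits a solution on $[0, a_n]$: on $[0, c_n]$ take the $3$--Bessel bridge of sign $-\operatorname{sgn}(B^{(n)}_{1, c_n})$, which makes the filter equation on $[c_n, a_n]$ solvable, and for $t > a_n$ let the two coordinates follow the noise. Setting $\Omega := \bigcap_{n \in \N} \Omega^{(n)}$, which is still of full measure, and assembling these coordinatewise solutions produces a (necessarily non--adapted) solution of \eqref{eq:no_adapted_solutions_inf} on all of $[0,1]$.

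\emph{Non--existence of adapted solutions.} Suppose $(X, B)$ were a weak solution defined on $[0, \sigma]$ for a Markov time $\sigma$ with $P(\sigma > 0) > 0$. Fix $n$. On the event $\{\sigma > c_n\}$ the $n$--th pair of coordinates is defined past time $c_n$, and exactly as in the proof of Theorem~\ref{th:no_adapted_solutions} the event $\{\operatorname{sgn}(X_{2n-1, c_n}) = \operatorname{sgn}(X_{2n, c_n})\}$ must be $P$--null in restriction to $\{\sigma > c_n\}$: otherwise the $n$--th filter equation, a rescaled and shifted copy of \eqref{eq:no_solutions}, would have a weak solution on a nondegenerate interval. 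Since $X_{2n-1} = B^{(n)}_1$ on $[0, c_n]$ and $X_{2n}$ does not change sign on $(0, c_n]$ by Proposition~\ref{pr:helper_equation_properties}, the limit $S_n := \lim_{t \downarrow 0} \operatorname{sgn}(X_{2n, t})$ exists on $\{\sigma > 0\}$, is $\mathcal F_{0+}$--measurable, and equals $-\operatorname{sgn}(B^{(n)}_{1, c_n})$ on $\{\sigma > c_n\}$. Because the increments of $B$ over $(0, \infty)$ are independent of $\mathcal F_{0+}$, the sequence $(\operatorname{sgn}(B^{(n)}_{1, c_n}))_{n \in \N}$ is i.i.d.\ symmetric $\pm 1$--valued and independent of $(S_n)_{n \in \N}$; hence, by the conditional Borel--Cantelli lemma, the relation $S_n = -\operatorname{sgn}(B^{(n)}_{1, c_n})$ can hold for all sufficiently large $n$ only on a $P$--null set. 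But since $c_n \downarrow 0$, on $\{\sigma > 0\}$ this relation does hold for all sufficiently large $n$, and therefore $P(\sigma > 0) = 0$ --- a contradiction. This is the infinite--horizon analogue of the identity in Remark~\ref{re:measurability_remark}.

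\emph{Main obstacle.} The two points requiring care are: (i) verifying that Brownian scaling turns the explicit piecewise drift of Theorem~\ref{th:no_adapted_solutions} into a Borel drift with exactly the qualitative behaviour listed above --- in particular that the (local) non--solvability of \eqref{eq:no_solutions} is stable under the time shift and under passing to a subinterval and to an event of merely positive probability; and (ii) the measure--theoretic bookkeeping around the Markov time $\sigma$, where one converts the $\mathcal F_{0+}$--measurability of the bridge signs into a genuine $0$--$1$ conclusion using that the post--$0$ increments of $B$ are independent of $\mathcal F_{0+}$. Both reduce to facts already in hand for the two--dimensional equation, so the essential new ingredient is the rescaling-and-stacking scheme itself.
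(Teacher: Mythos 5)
Your construction is the same as the paper's: countably many independently driven, rescaled copies of the two--dimensional example of Theorem~\ref{th:no_adapted_solutions}, installed on disjoint coordinate pairs over time windows shrinking to $0$, with path-by-path existence obtained by intersecting the full-measure sets of the individual copies. Where you genuinely diverge is in the final step ruling out adapted solutions up to a Markov time $\sigma$ with $P(\sigma>0)>0$. The paper fixes a \emph{single} window index $K$ with $P[\tau>2/K]\geq \tfrac34 P[\tau>0]$, applies the sign identity of Remark~\ref{re:measurability_remark} only on $A_K=\{\tau>2/K\}$, and derives the contradiction $P[A]\leq 2P[A\setminus A_K]\leq \tfrac12 P[A]$ by a conditional-expectation computation given $\mathcal F_{0+}$ together with $\mathbb E[\operatorname{sgn}(B_{1,1/K})\,|\,\mathcal F_{0+}]=0$. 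You instead use \emph{all} windows at once: the early signs $S_n$ of the bridge coordinates are $\mathcal F_{0+}$-measurable, the signs $\operatorname{sgn}(B^{(n)}_{1,c_n})$ form an i.i.d.\ symmetric sequence independent of $\mathcal F_{0+}$, and on $\{\sigma>0\}$ the identity $S_n=-\operatorname{sgn}(B^{(n)}_{1,c_n})$ holds for all large $n$ since $c_n\downarrow 0$; conditioning on $\mathcal F_{0+}$ makes these matching events independent with probability $\tfrac12$ each, so their eventual simultaneous occurrence is null (this is really a direct infinite-product computation rather than ``conditional Borel--Cantelli'', but the substance is right). Your route avoids the paper's $\tfrac34$-bookkeeping and handles the stopping time cleanly, at the price of needing the sign identity for every large $n$ rather than one well-chosen $K$; both arguments rest on the same two inputs (the rescaled Remark~\ref{re:measurability_remark} and independence of the post-zero Brownian path from $\mathcal F_{0+}$). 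One caveat: the localization you flag but do not carry out --- that the sign dichotomy and the non-solvability of the filter equation \eqref{eq:no_solutions} survive time-shifting, restriction to the stochastic interval $[c_n,\sigma\wedge a_n]$, and restriction to the event $\{\sigma>c_n\}$ --- is not literally ``already in hand'', since Proposition~\ref{pr:helper_equation_properties} and the non-existence statement for \eqref{eq:no_solutions} are formulated on deterministic intervals; it does go through (note $\{\sigma>c_n\}\in\mathcal F_{c_n}$, so conditioning preserves the Brownian structure after $c_n$, and Cherny's squared-process argument localizes), and the paper is equally terse on exactly this point when it applies Remark~\ref{re:measurability_remark} on $A_K$ to a solution defined only up to $\tau$, so I regard this as a shared elision rather than a gap specific to your proof.
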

\begin{proof}
One can modify the construction from Theorem \ref{th:no_adapted_solutions} to obtain an equation for which
the existence of adapted solutions does not hold on the interval 
$[0, 1/n]$. 
Considering a system of such equations with independent $2$-dimensional Brownian motions yields the required example. 
Indeed, the existence of a \textit{path-by-path} solution is obvious since it exists for each $2$-dimensional equation. Let us prove that no adapted solutions to \eqref{eq:no_adapted_solutions_inf} exist.
Let us assume that there is a \textit{weak} solution to SDE \eqref{eq:no_adapted_solutions_inf} defined up to the Markov moment $\tau$ and $\P[\tau > 0] \neq 0$.  
Set $\mathcal{F}_{0+} := \bigcap \limits_{t > 0} \mathcal{F}_{t},$
it is well--known that the Brownian motion
remains independent of the $\sigma$--field $\mathcal{F}_{0+}$.
Since
$$
\lim_{k \to \infty} \P[\tau > 2/k] = \P[\tau > 0 ] > 0,
$$ 
we can find $K \in \N$ such that
$$
\P[A_K] \geq \frac{3}{4} \P[A] > 0,
$$
where
$$
A_K := \{\tau > 2/K\}, \ A := \{\tau > 0\}.
$$
Let us consider the $K$th $2$-dimensional equation and since the number $K$ is fixed 
for the sake of brevity we will still denote its solution by $(X, B)$.
Taking into account Remark \ref{re:measurability_remark} we have the following equality:
$$
\mathbbm{1}_{A_K}\operatorname{sgn}(B_{1, 1/K}) = 
\mathbbm{1}_{A_K} \xi, 
$$
where
$$
\xi := \lim_{n \to \infty}-\operatorname{sgn}(X_{2, 1/n}).
$$
One can notice that $A \in \mathcal{F}_{0+}$, $\xi$ is measurable with respect to $\mathcal{F}_{0+}$ and a.s. $|\xi| = 1$.
We have the following chain of equalities:

\begin{equation*}
\begin{split}
\mathbbm{1}_{A}\xi &= 
\mathbb{E}\bigl[\mathbbm{1}_{A_K} \xi | \mathcal{F}_{0+}\bigr] +
\mathbb{E}\bigl[(\mathbbm{1}_{A} -\mathbbm{1}_{A_K}) \xi | \mathcal{F}_{0+}\bigr]\\
 &=  \mathbb{E}\bigl[\mathbbm{1}_{A_K} \operatorname{sgn}(B_{1, 1/K}) | \mathcal{F}_{0+}\bigr]  +
\mathbb{E}\bigl[(\mathbbm{1}_{A} -\mathbbm{1}_{A_K}) \xi | \mathcal{F}_{0+}\bigr] \\
&= 
\mathbb{E}\bigl[\mathbbm{1}_{A} \operatorname{sgn}(B_{1, 1/K}) | \mathcal{F}_{0+}\bigr] 
 +
\mathbb{E}\bigl[(\mathbbm{1}_{A_K} -\mathbbm{1}_{A}) \operatorname{sgn}(B_{1, 1/K}) |  \mathcal{F}_{0+}\bigr] 
 +
\mathbb{E}\bigl[(\mathbbm{1}_{A} -\mathbbm{1}_{A_K}) \xi | \mathcal{F}_{0+}\bigr] \\           
&= 
\mathbbm{1}_{A}\mathbb{E}\bigl[\operatorname{sgn}(B_{1, 1/K}) | \mathcal{F}_{0+}\bigr] +
\mathbb{E}\bigl[(\mathbbm{1}_{A_K} -\mathbbm{1}_{A}) \operatorname{sgn}(B_{1, 1/K}) |  \mathcal{F}_{0+}\bigr] 
 +
\mathbb{E}\bigl[(\mathbbm{1}_{A} -\mathbbm{1}_{A_K}) \xi | \mathcal{F}_{0+}\bigr]\\
&=
\mathbb{E}\bigl[(\mathbbm{1}_{A_K} -\mathbbm{1}_{A}) \operatorname{sgn}(B_{1, 1/K}) |  \mathcal{F}_{0+}\bigr] 
 +
\mathbb{E}\bigl[(\mathbbm{1}_{A} -\mathbbm{1}_{A_K}) \xi | \mathcal{F}_{0+}\bigr],
\end{split}
\end{equation*}
where we have used the fact that
$$
\mathbb{E}\bigl[\operatorname{sgn}(B_{1, 1/K}) | \mathcal{F}_{0+}\bigr]  = 0.
$$
Then:
\begin{equation*}
\begin{split}
\P[A] &=\mathbb{E} \mathbbm{1}_{A}|\xi| \\
&\leq 
\mathbb{E}\Bigl | 
\mathbb{E}\bigl[(\mathbbm{1}_{A_K} -\mathbbm{1}_{A}) \operatorname{sgn}(B_{1, 1/K}) |  \mathcal{F}_{0+}\bigr] \Bigr| +
\mathbb{E}\Bigl | 
\mathbb{E}\bigl[(\mathbbm{1}_{A} -\mathbbm{1}_{A_K}) \xi | \mathcal{F}_{0+}\bigr]
\Bigr | \\
&\leq 2 \P[A \setminus A_K] \leq \frac{1}{2}\P[A].
\end{split}
\end{equation*}
The established inequality contradicts to the assumption $\P[A] = \P[\tau > 0 ] > 0$.
Now it is easy to complete the proof.
\end{proof}
\section{Pathwise uniqueness without path-by-path uniqueness}

\begin{theorem}\label{th:no_path_by_path_uniqueness}
There exists a Borel mapping $b = (b_1, b_2, b_3): [0, 2] \times \mathbb{R}^3 \to \mathbb{R}^3$,
\begin{equation}\label{eq:no_path_by_path_uniqueness}
X_{t} = \int_{[0, t]}b(s, X_s)\, ds + B_t,\ X_0 = 0, \ t \in [0, 2]
\end{equation}
such that
\begin{enumerate}
\item there exists a set of Brownian trajectories $\Omega \subset C( [0,2], \R^3)$
of full measure, such that for every $B \in \Omega$  integral equation \eqref{eq:no_path_by_path_uniqueness} has at least two different solutions (understood in the pure ODE sense) defined on the whole  interval $[0, 2]$.
\item 
There exists a \textit{weak} solution to the corresponding SDE defined on the whole interval $[0, 2]$, moreover, this solution is \textit{strong} and \textit{pathwise} uniqueness holds.
\end{enumerate}
\end{theorem}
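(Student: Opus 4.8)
The plan is to graft an extra coordinate onto the ``randomized filter'' of Theorem~\ref{th:no_adapted_solutions}: the first two coordinates will essentially reproduce that construction but in an \emph{asymmetric} way that leaves one sign choice for $X_2$ consistent no matter what the future does, and the third coordinate will carry a twin copy of the filter so that, whatever the sign of $B_{1,1}$ turns out to be, there is always a second, non-adapted, way of solving the ODE. I would take $b_1\equiv 0$ on $[0,1]$ and $b_2(t,x):=f(t,x_2)$, $b_3(t,x):=f(t,x_3)$ on $[0,1]$ with $f$ as in \eqref{eq:main_helper_sde}; then $X_1=B_1$, while each of $X_2,X_3$ is a $3$--Bessel bridge to $1$ or to $-1$, keeping its sign on $(0,1]$ and equal to $\pm1$ at $t=1$ (Proposition~\ref{pr:helper_equation_properties}). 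On $[1,2]$ I would put $b_1(t,x):=\mathbbm{1}_{\{x_1\neq 0\}}\frac{1}{x_1}$ (the $3$--Bessel drift, so $X_1$ keeps the sign of $B_{1,1}$ throughout $[1,2]$) and
\[
b_2(t,x):=\mathbbm{1}_{\{x_2>0\}}\frac{1}{x_2}+\mathbbm{1}_{\{x_2<0\}}\Bigl(\mathbbm{1}_{\{x_1>0\}}\frac{-1}{2(x_2+1)}+\mathbbm{1}_{\{x_1<0\}}\frac{1}{x_2}\Bigr),
\]
\[
b_3(t,x):=\mathbbm{1}_{\{x_3>0\}}\frac{1}{x_3}+\mathbbm{1}_{\{x_3<0\}}\Bigl(\mathbbm{1}_{\{x_1<0\}}\frac{-1}{2(x_3+1)}+\mathbbm{1}_{\{x_1>0\}}\frac{1}{x_3}\Bigr),
\]
with the reciprocals set to $0$ at their poles. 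The mechanism is: on $[1,2]$ a positive $X_2$ (resp.\ $X_3$) always runs as a $3$--Bessel process and survives; a negative $X_2$ runs as a $3$--Bessel process if $X_{1,1}<0$ but, if $X_{1,1}>0$, lands on the no--solution equation \eqref{eq:no_solutions} recentred at $-1$ and cannot be continued; symmetrically for $X_3$ with the role of $\operatorname{sgn}(X_{1,1})$ reversed.

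For part (2) I would first exhibit the strong solution $X^\ast$: equal to $(B_1,\text{positive }3\text{--Bessel bridge},\text{positive }3\text{--Bessel bridge})$ on $[0,1]$ and, on $[1,2]$, to the (unique, strong) $3$--Bessel solutions started from $(B_{1,1},1,1)$. Each piece is a strong solution with integrable drift along its trajectory by Proposition~\ref{pr:helper_equation_properties} and Theorem~\ref{th:cherny_uniqueness_theorem}, so $X^\ast$ solves \eqref{eq:no_path_by_path_uniqueness}; this gives strong existence. For pathwise uniqueness, let $(X,B)$ be any weak solution on $[0,2]$. On $[0,1]$ one has $X_1=B_1$, and $X_2,X_3$ solve \eqref{eq:main_helper_sde}, hence keep a constant sign on $(0,1]$; those signs are $\mathcal F_{0+}$--measurable, hence independent of $B$. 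If $\P(\operatorname{sgn}(X_2)=-)>0$, then by independence and $\P(B_{1,1}>0)=\tfrac12$ the event $\{\operatorname{sgn}(X_2)=-,\ B_{1,1}>0\}$ has positive probability, and on it $X_{2,1}=-1$, $X_{1,\cdot}>0$ on $[1,2]$, so $X_2$ would have to solve the no--solution equation \eqref{eq:no_solutions} on a nondegenerate interval (impossible, arguing exactly as in the proof of Theorem~\ref{th:no_adapted_solutions}); hence $\operatorname{sgn}(X_2)=+$ a.s., and likewise $\operatorname{sgn}(X_3)=+$ a.s. Therefore $X_2,X_3$ agree a.s.\ on $[0,1]$ with the positive $3$--Bessel bridge, and on $[1,2]$ all three coordinates solve $3$--Bessel SDEs from nonzero starting points, where pathwise uniqueness holds (Theorem~\ref{th:cherny_uniqueness_theorem}); thus $X=X^\ast$ a.s., which yields pathwise uniqueness (and, incidentally, uniqueness in law).

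For part (1) I would let $\Omega$ be the full--measure set of Brownian paths on which all the relevant $3$--Bessel bridges (positive and negative, in the second and third coordinate) and all the relevant $3$--Bessel SDEs on $[1,2]$ (started from $\pm1$ and from $B_{1,1}$) are simultaneously defined. For $B\in\Omega$ one solution is $X^\ast(B)$. A second one is obtained by flipping a bridge according to the \emph{future} sign $\operatorname{sgn}(B_{1,1})$: if $B_{1,1}>0$, replace the third coordinate by the negative $3$--Bessel bridge on $[0,1]$ followed by the corresponding negative $3$--Bessel solution on $[1,2]$ (legitimate since $X_{1,\cdot}>0$ puts $b_3$ in its $3$--Bessel regime); if $B_{1,1}<0$, do the same to the second coordinate. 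In either case this is a genuine solution of the integral equation on $[0,2]$ which differs from $X^\ast(B)$ on $(0,1)$. I expect the only real obstacle to be the pathwise--uniqueness step: one must argue that \emph{no} weak solution can make the ``wrong'' sign choice, and the key point making this work is that the sign of a Bessel--bridge coordinate, being $\mathcal F_{0+}$--measurable, is independent of the driving Brownian motion and so cannot be correlated with $\operatorname{sgn}(B_{1,1})$; once this is in place it is pinned down by \eqref{eq:no_solutions} precisely as in Theorem~\ref{th:no_adapted_solutions}, and everything else reduces to the quoted facts about Bessel processes, Bessel bridges, and Cherny's equation.
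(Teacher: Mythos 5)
Your construction is correct, but it is a genuine variant of the paper's, both in the drift and in the uniqueness argument, so it is worth comparing the two. The paper keeps the two--dimensional ``no adapted solutions'' example of Theorem~\ref{th:no_adapted_solutions} intact and adds a third coordinate acting as a switch: the drifts $b_1,b_2$ are multiplied by $\mathbbm{1}_{\{x_3>0\}}$, so the unique strong solution is $(B_1,B_2,\text{negative bridge/Bessel})$ with the filter switched off, and the second path-by-path solution is obtained by taking $X_3$ positive, which forces $(X_1,X_2)$ to be the non-adapted solution of Theorem~\ref{th:no_adapted_solutions}; pathwise uniqueness is then proved by conditioning on $\mathcal F_{1/2}$ and using the strict bound $0<|P_{1/2}\operatorname{sgn}(x)|<1$. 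You instead duplicate the bridge coordinate, aim Cherny's no--solution drift at a negative $X_2$ when $X_1>0$ and at a negative $X_3$ when $X_1<0$, take the ``both bridges positive'' process as the strong solution, and obtain the second path-by-path solution by flipping whichever bridge the future sign of $B_{1,1}$ leaves unconstrained; for uniqueness you use that the (constant) signs of $X_2,X_3$ on $(0,1]$ are $\mathcal F_{0+}$--measurable and hence independent of $B$ — which is exactly the device the paper reserves for the infinite-dimensional Theorem~\ref{th:no_adapted_solutions_inf}, and is arguably slightly cleaner than the heat-semigroup computation, at the price of invoking independence of $B$ from $\mathcal F_{0+}$ rather than a bare conditional-expectation identity. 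Two steps you leave implicit deserve a sentence each, though they are at the same level of detail as the corresponding assertions in the paper's Theorems~\ref{th:no_adapted_solutions} and~\ref{th:no_path_by_path_uniqueness}: (i) on $[1,2]$ a solution of your hybrid drift started at $+1$ must stay positive, which needs a localization/pasting argument with the $3$--Bessel process up to the first hitting time of $0$ (below $0$ your drift is no longer the Bessel one, so Cherny's sign-preservation cannot be quoted directly); and (ii) the contradiction with equation \eqref{eq:no_solutions} is reached only on a positive-probability event, so one should condition on that $\mathcal F_1$--measurable event (under which the increments $B_t-B_1$ remain a Brownian motion) and run Cherny's local argument there.
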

\begin{proof}
For $t \in [0, 1]$ let us set
$$
b_1 := 0,
$$
$$
b_2 := \mathbbm{1}_{\{x_2 > 0\}}\mathbbm{1}_{\{x_3 > 0\}}
\Bigl(
\frac{1 - x_2}{1 - t} + \frac{1}{x_2}
\Bigr)
- 
\mathbbm{1}_{\{x_2 < 0\}}\mathbbm{1}_{\{x_3 > 0\}}
\Bigl(
\frac{1 - (-x_2)}{1 - t} + \frac{1}{-x_2}
\Bigr),
$$
$$
b_3 :=
\mathbbm{1}_{\{x_3 > 0\}}
\Bigl(
\frac{1 - x_3}{1 - t} + \frac{1}{x_3}
\Bigr)
- 
\mathbbm{1}_{\{x_3 < 0\}}
\Bigl(
\frac{1 - (-x_3)}{1 - t} + \frac{1}{-x_3}
\Bigr).
$$

For $t \in (1, 2]$ let us set
$$
b_1 := \mathbbm{1}_{\{x_1 \neq 0\}} \mathbbm{1}_{\{x_3 > 0\}}
 \frac{1}{x_1},
$$
\begin{multline*}
b_2 := \mathbbm{1}_{\{x_1 > 0\}} \mathbbm{1}_{\{x_2 > 0\}} \mathbbm{1}_{\{x_3 > 0\}}
\frac{-1}{2(x_2 - 1)} +
\mathbbm{1}_{\{x_1 < 0\}}\mathbbm{1}_{\{x_2 < 0\}} \mathbbm{1}_{\{x_3 > 0\}}
\frac{-1}{2(x_2 + 1)} \\
+\mathbbm{1}_{\{x_1 > 0\}}\mathbbm{1}_{\{x_2 < 0\}} \mathbbm{1}_{\{x_3 > 0\}} \frac{1}{x_2} + 
\mathbbm{1}_{\{x_1 < 0\}}\mathbbm{1}_{\{x_2 > 0\}} \mathbbm{1}_{\{x_3 > 0\}} \frac{1}{x_2} ,
\end{multline*}
$$
 b_3 :=  \mathbbm{1}_{\{x_3 \neq 0\}}  \frac{1}{x_3}.
$$

First, let us present a \textit{strong} solution to  equation \eqref{eq:no_path_by_path_uniqueness}.
On the interval $[0,1]$ let $X_3$ be the $(-1)\times 3$--Bessel bridge
from $0$ to $-1$ and on the interval $[1, 2]$ let $X_3$ be the 
$(-1)\times 3$--Bessel process with  the initial condition $X_{3,1} = -1$. Applying Theorem \ref{th:cherny_uniqueness_theorem} and Proposition \ref{pr:helper_equation_properties} one can see that 
$X_3$ is indeed adapted to the filtration generated by the Brownian motion $B$ and negative for all $t \in (0, 2]$.
Let us set $X_1 := B_1$ and $X_2 = B_2$ for $t \in [0, 2]$.
It is clear that since $X_{3, t}$ is nonpositive for $t \in [0, 2]$ then
the drifts $b_1, b_2$ are identically equal to zero, therefore
the constructed process $(X_1, X_2, X_3)$ is indeed a \textit{strong}
solution to  SDE \ref{eq:no_path_by_path_uniqueness}.

Second, one can notice, that if we take the positive ``version'' of $X_3$, e.g.
on the interval $[0, 1]$ we can define $X_3$ as the $3$--Bessel bridge
from $0$ to $1$ and on the interval $[1, 2]$ let $X_3$ be the 
$3$--Bessel process with  the initial condition $X_{3,1} = 1$, then
the drifts $b_1, b_2$ coincide with the mappings constructed in Theorem \ref{th:no_adapted_solutions}, thus in this case the equation 
for $(X_1, X_2)$ has only non--adapted solutions, but the set of solutions is nonempty. We have shown that with probability one 
there are at least two different solutions to  integral equation
\eqref{eq:no_path_by_path_uniqueness} (corresponding to the positive and negative ``versions'' of $X_3$) and \textit{path-by-path} uniqueness does not hold.

Finally, let us establish \textit{pathwise} uniqueness for equation
\eqref{eq:no_path_by_path_uniqueness}.
Let $$X = (X_1, X_2, X_3)$$ be a \textit{weak} solution to stochastic differential equation \ref{eq:no_path_by_path_uniqueness} on the interval $[0, 2]$ with some Brownian motion $B$.
We would like to show that a.s. $X$ coincides with the strong solution
presented above. Applying Theorem \ref{th:cherny_uniqueness_theorem} and Proposition \ref{pr:helper_equation_properties} one may notice that
a.s. $X_3$ does not change its sign
for $t \in (0, 2]$. 
The arguments from the proof of Theorem \ref{th:no_adapted_solutions} show that on the set 
$$
U := \{X_{3, 1/2} > 0\} \in \mathcal{F}_{1/2}
$$ 
a.s.~we have the equality
$$\operatorname{sgn}(B_{1, 1}) = -\operatorname{sgn}(X_{2, 1/2}).$$
Then:  
\begin{equation}\label{eq:almost_contradiction}
\begin{split}
\mathbb{E}\bigl[\operatorname{sgn}(B_{1, 1})|\mathcal{F}_{1/2}\bigr] &= 
\mathbb{E}\bigl[\mathbbm{1}_{U}\operatorname{sgn}(B_{1, 1})|\mathcal{F}_{1/2}\bigr] + 
\mathbb{E}\bigl[\mathbbm{1}_{\Omega\setminus U}\operatorname{sgn}(B_{1, 1})|\mathcal{F}_{1/2}\bigr] \\
 &= -\mathbbm{1}_{U}\operatorname{sgn}(X_{2, 1/2}) + 
 \mathbbm{1}_{\Omega\setminus U} \mathbb{E}\bigl[\operatorname{sgn}(B_{1, 1})|\mathcal{F}_{1/2}\bigr].
\end{split}
\end{equation}
But at the same time
$$
\mathbb{E}\bigl[\operatorname{sgn}(B_{1, 1})|\mathcal{F}_{1/2}\bigr] = 
[P_{1/2}\operatorname{sgn} ](B_{1, 1/2}),
$$
where $(P_{t})_{t \geq 0}$ is the standard heat semigroup 
defined by the formula
$$
P_{t}f (x) = \int_{\mathbb{R}}f(y
)\frac{1}{\sqrt{2\pi}}e^{-\frac{(x- y)^2}{2t}}\, dy.
$$
It easy to see that for almost all $x \in \mathbb{R}$ we have the strict inequalities
$$0 < | P_{1/2}\operatorname{sgn} (x) | < 1,$$
consequently, the equality
$$
[P_{1/2}\operatorname{sgn} ](B_{1, 1/2}) = -\mathbbm{1}_{U}\operatorname{sgn}(X_{2, 1/2}) + 
 \mathbbm{1}_{\Omega\setminus U} \mathbb{E}\bigl[\operatorname{sgn}(B_{1, 1})|\mathcal{F}_{1/2}\bigr]
$$
can hold a.s.~only if $\mathbb{P}[U] = 0$.
This means that a.s. $X_3$ is negative for $t \in (0, 2]$
and now it is trivial to complete the proof.
\end{proof}

\begin{theorem}\label{th:no_path_by_path_uniqueness_inf}
Let $B$ be a cylindrical Brownian motion. There exists a Borel mapping $b: [0, 1] \times \mathbb{R}^{\infty} \to \mathbb{R}^{\infty}$,
\begin{equation}\label{eq:no_path_by_path_uniqueness_inf}
X_{t} = \int_{[0, t]}b(s, X_s)\,ds + B_t,\ X_0 = 0, \ t \in [0, 1]
\end{equation}
such that
\begin{enumerate}
\item there exists a set of Brownian trajectories $\Omega \subset C( [0,2], \R^\infty)$
of full measure such that for every $B \in \Omega$  integral equation \eqref{eq:no_path_by_path_uniqueness_inf} has at least two different solutions (understood in the pure ODE sense) defined on the whole  interval $[0, 1]$.
\item \textit{pathwise} uniqueness holds in the sense that 
for any Markov moment $\tau$ there exists
a unique weak solution to  SDE \eqref{eq:no_path_by_path_uniqueness_inf} defined up to the moment~$\tau$, this solution is strong and pathwise uniqueness holds.
\end{enumerate}
\end{theorem}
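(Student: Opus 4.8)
The plan is to graft the three--dimensional construction of Theorem~\ref{th:no_path_by_path_uniqueness} onto the ``accumulation at the origin'' device used in the proof of Theorem~\ref{th:no_adapted_solutions_inf}. Write the cylindrical Brownian motion as a sequence $B=(B^{(1)},B^{(2)},\dots)$ of independent three--dimensional Brownian motions and, for each $n\ge 1$, let the $n$-th block of three coordinates solve a time--rescaled copy of equation~\eqref{eq:no_path_by_path_uniqueness} whose whole two--phase structure --- the Bessel bridge phase and the randomized filter phase --- is compressed into $[0,1/n]$, the block's drift being set to zero on $[1/n,1]$; every block starts at $0$ at time $0$. Concatenating the blockwise drifts produces the required Borel map $b\colon[0,1]\times\R^\infty\to\R^\infty$, just as in Theorem~\ref{th:no_adapted_solutions_inf}.

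I would dispose of the easy half first. Choosing in every block the negative version of the switch coordinate --- the $(-1)\times 3$--Bessel bridge followed by the $(-1)\times 3$--Bessel process, which by Theorem~\ref{th:cherny_uniqueness_theorem} and Proposition~\ref{pr:helper_equation_properties} is adapted and stays strictly negative on $(0,1]$ --- switches off the drifts of the other two coordinates of that block, so those coordinates equal the driving Brownian motion; this gives a \emph{strong} solution on $[0,1]$, whose restriction to $[0,\tau]$ also settles existence in part~(2). On the other hand, taking in the first block the positive version of the switch forces, exactly as in Theorem~\ref{th:no_path_by_path_uniqueness}, the remaining two coordinates of that block to solve the ``no adapted solutions'' equation of Theorem~\ref{th:no_adapted_solutions}, which nonetheless has a solution for almost every Brownian path; intersecting the full--measure sets of good trajectories over all blocks yields a set $\Omega$ of full Wiener measure on which~\eqref{eq:no_path_by_path_uniqueness_inf} has at least two distinct ODE solutions, proving part~(1).

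The core is the uniqueness claim in part~(2). Fix a Markov moment $\tau$ with $\P[\tau>0]>0$ and a weak solution $(X,B)$ defined up to $\tau$; I would show $X$ coincides with the strong solution. Put $\mathcal F_{0+}:=\bigcap_{t>0}\mathcal F_t$, so $A:=\{\tau>0\}\in\mathcal F_{0+}$ and $B$ is independent of $\mathcal F_{0+}$. By Theorem~\ref{th:cherny_uniqueness_theorem} and Proposition~\ref{pr:helper_equation_properties} the switch coordinate of each block never changes sign on $(0,1]$, so the sign of each switch near $0$ is $\mathcal F_{0+}$--measurable. Assume for contradiction that some switch is positive on a set of positive probability. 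As in Theorem~\ref{th:no_adapted_solutions_inf}, pick $K$ with $\P[\tau>1/K]\ge\frac34\P[A]$; on $\{\tau>1/K\}$ the $K$-th block has run through both of its phases, so the argument of Theorem~\ref{th:no_path_by_path_uniqueness} applies to it and yields, on the part of this event where the $K$-th switch is positive, the identity $\operatorname{sgn}(B^{(K)}_{1,c_K})=\xi$ for a fixed time $c_K\in(0,1/K)$ and an $\mathcal F_{0+}$--measurable sign $\xi=\lim_{m\to\infty}-\operatorname{sgn}(X^{(K)}_{2,1/m})$. Since $\E\bigl[\operatorname{sgn}(B^{(K)}_{1,c_K})\mid\mathcal F_{0+}\bigr]=0$ while $|\xi|=1$, the conditional--expectation estimate of display~\eqref{eq:almost_contradiction}, in the $\mathcal F_{0+}$ form used after Theorem~\ref{th:no_adapted_solutions_inf}, forces that event to be null, and the zero--one--law bookkeeping propagates this to every block. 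Hence all switches are a.s.\ negative, all drifts of~\eqref{eq:no_path_by_path_uniqueness_inf} vanish along $(X,B)$, and $X$ is the strong solution.

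The step I expect to be the main obstacle is precisely this localisation for an \emph{arbitrary} Markov moment: ruling out the finitely many ``early'' blocks whose filter phase need not have been reached by time $\tau$. Compressing the $n$-th block into $[0,1/n]$ is what makes this possible --- it lets one trade any positive Markov moment for an index $K$ whose entire block sits inside $[0,\tau]$ with positive probability --- but fitting together Cherny's sign--preservation (which pins a switch's sign to its germ at $0$), the heat--semigroup computation of Theorem~\ref{th:no_path_by_path_uniqueness}, and the $\mathcal F_{0+}$ zero--one law of Theorem~\ref{th:no_adapted_solutions_inf}, uniformly over the infinitely many blocks, is where the care will be needed.
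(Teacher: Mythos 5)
There is a genuine gap, and it sits exactly where you flagged it: the ``early'' blocks. In your construction every block carries its own switch coordinate, a time--compressed copy of the third coordinate of \eqref{eq:no_path_by_path_uniqueness}, and the only mechanism forcing a switch to be negative is the filter phase \emph{of that same block}, which becomes active only once the solution is defined past the time where that block's filter phase starts. So take a small deterministic moment $\tau\equiv\eps$ with $\eps$ below the start of the first block's filter phase: nothing then constrains the first block's switch. By Proposition~\ref{pr:helper_equation_properties} both the positive and the negative (compressed) bridge are strong solutions of its equation on $[0,\eps]$, and if the positive one is chosen, the gated second coordinate of that block again admits two strong versions while the first coordinate is simply Brownian motion. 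This yields several distinct adapted (indeed strong) solutions of your infinite system up to $\tau$, so part~(2) of the theorem --- uniqueness up to an \emph{arbitrary} Markov moment --- fails for your drift. The step ``the zero--one--law bookkeeping propagates this to every block'' cannot be repaired: the blocks are driven by independent Brownian motions, ruling out positivity of the $K$-th switch for large $K$ says nothing about blocks whose filter phase lies beyond $\tau$, and the example above shows the desired conclusion is in fact false. What your construction does prove is only the weaker statement where uniqueness is claimed for solutions defined on all of $[0,1]$ (then every block's filter phase is reached and the argument of Theorem~\ref{th:no_path_by_path_uniqueness} applies blockwise).

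The paper's construction differs at precisely this point: it uses a \emph{single} scalar switch rather than one switch per block. The drift is $b=(b_1,b_2)$ with $b_1:=\mathbbm{1}_{\{x_2>0\}}\widetilde b(x_1)$, where $\widetilde b$ is the drift of Theorem~\ref{th:no_adapted_solutions_inf} (the compressed two--dimensional blocks, with no switches of their own), and $b_2$ is the two--sided bridge drift of \eqref{eq:main_helper_sde}. The key is that Theorem~\ref{th:no_adapted_solutions_inf} is already localized at $t=0+$: the system $\widetilde b$ admits no adapted solution up to \emph{any} Markov moment $\tau$ with $\P[\tau>0]>0$. Since the switch of any weak solution preserves its sign, the event that it is positive near $0$ is $\mathcal F_{0+}$--measurable; on that event the gated coordinates would furnish an adapted solution of \eqref{eq:no_adapted_solutions_inf} up to $\tau$, which is impossible. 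Hence the switch is a.s.\ nonpositive, the gate stays closed, $X_1=B_1$, and $X_2$ is the unique nonpositive solution of \eqref{eq:main_helper_sde}; no per--block bookkeeping is needed, and this is exactly what makes the conclusion hold for every Markov moment. If you want to keep your blockwise picture, you must at least remove the individual switches and gate the whole family by one bridge coordinate, i.e.\ reproduce the paper's construction.
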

\begin{proof}
Let $\widetilde{b}$ be the drift constructed in the proof of Theorem \ref{th:no_adapted_solutions_inf}.
Let us define the new drift $b = (b_1, b_2)$ as follows:
$$
b_1: [0, 1] \times \mathbb{R}^{\infty} \to \mathbb{R}^{\infty},
$$
$$
b_{1} := \mathbbm{1}_{\{x_2 > 0\}}\widetilde{b}(x_1), \ x_1 \in \R^{\infty}.
$$
$$
b_2 :  [0, 1] \times \mathbb{R} \to \mathbb{R},
$$
$$
b_2 := \mathbbm{1}_{\{x_2 > 0\}}
\Bigl(
\frac{1 - x_2}{1 - t} + \frac{1}{x_2}
\Bigr)
- 
\mathbbm{1}_{\{x_2 < 0\}}
\Bigl(
\frac{1 - (-x_2)}{1 - t} + \frac{1}{-x_2}
\Bigr), \ x_2 \in \R.
$$
Then the same arguments as in the proof of Theorem \ref{th:no_adapted_solutions_inf}, Theorem \ref{th:no_path_by_path_uniqueness} show that the drift $b$
has the  desired properties. 
\end{proof}

\section{Acknowledgement}
The authors would like to thank
V.I. Bogachev, M. R\"{o}ckner and L. Mytnik for
some fruitful discussions and 
the anonymous referee for the thorough reading and corrections which have greatly helped improve the manuscript.

This research was supported by the CRC 1283 at Bielefeld University.

\end{document}